\documentclass[12pt]{amsart}

\usepackage{amsmath} 
\usepackage{amssymb} 
\usepackage{amsthm} 
\usepackage{dsfont} 
\usepackage{graphicx} 

\newtheorem{theorem}{Theorem}[section]

\newtheorem{definition}[theorem]{Definition}
\newtheorem{example}[theorem]{Example}
\newtheorem{lemma}[theorem]{Lemma}

\begin{document}

\title[Sudoku Duality]{Duality for Sudoku}

\author{Thomas Fischer}
\address{Fuchstanzstr. 20, 60489 Frankfurt am Main, Germany.}

\email{dr.thomas.fischer@gmx.de}

\date{}     


\begin{abstract}
We consider a mathematical model for the classical Sudoku puzzle, which we call the primal 
problem and introduce a corresponding dual problem. Both problems are constraint satisfaction
models and a duality relation between them is proved. Based on these models, we 
introduce a primal and a dual optimization problem and show weak and strong duality properties.
\end{abstract}


\keywords{Sudoku, Duality, Integer programming, Constraint Systems, Nonlinear Inequalities, Optimization.}

\subjclass[2010]{Primary 49N15; Secondary 90C46 91A46 90C10}

\maketitle

                                        %
                                        %
\section{Introduction} \label{S:intro} 
A Sudoku is a square consisting of a 9$\times$9 grid which is
partly pre-populated by numbers between 1 and 9 called the givens.
The problem consists of finding numbers between 1 and 9 for all
unpopulated cells, such that each row, each column and each block 
consists of exactly the numbers $1, \ldots, 9$. The blocks of a Sudoku 
partition the Sudoku square into subsquares of size 3$\times$3. Each 
Sudoku consists of 9 rows, 9 columns and 9 blocks.

In \cite{Fis} we introduced a mathematical model for this Sudoku puzzle and called it the 
generalized Sudoku problem. As we are concerned throughout this paper with duality, we call it 
in Section \ref{S:PP} the primal problem. We introduce a dual problem in Section \ref{S:DP}
and show the relation between the primal and the dual. This relation will be established using
a necessary solution condition developed in \cite{Fis} and can be interpreted as a duality result. 
But the primal and the dual problem defined in this way do not allow to describe 
duality results considering duality gaps. Therefore we introduce in Section \ref {S:opt} primal and dual 
optimization problems and show how they replace the original problems. In section \ref{S:dual}
we prove a weak and a strong duality property between the primal and the dual optimization problem.

The primal and the dual problem are of a type, which is often called constraint satisfaction problem 
or CSP. For a description of general CSPs with examples, solution techniques and applications see 
the survey article of Dechter and Rossi \cite{DR}. Duality statements are standard properties of linear 
and nonlinear programs. An overview with several examples and applications can be found in the 
book of Boyd and Vandenberghe \cite{BV}. The linear case has been treated by Dantzig and Thapa
\cite{DT}.

Finally, we collect some basic terms and notations. Let $\mathds{Z}$ denote the set of integers.
Let the $n$-times cartesian product of any set be indicated by a superscript $n$, i.e., $\mathds{Z}^n$ 
denote the $n$-times cartesian product of $\mathds{Z}$. The vectors $\mathbf{0}$ 
respectively $\mathbf{1}$ denote the zero respectively one vector, consisting of zeros respectively 
ones in each component. The number of components of these vectors is often indicated by an index. 
Each vector is considered to be a column vector. $U$ denotes the identity matrix. The transpose 
of a vector or a matrix is indicated by a superscript $T$. The sign function is denoted by $sgn$. 
We consider the sum over an empty index set to be zero. The brackets with index $[]_{i}$ denote 
the $i^{th}$ component of a vector contained in the brackets.
The symbol $\sharp$ denotes the number of elements (cardinality) of a finite set.

                                        %
                                        %
\section{The Primal Problem} \label{S:PP}
We replicate here the definition of the generalized Sudoku problem as introduced in \cite{Fis} 
and call it this time the primal problem. Let $n$ be an integer with $n \ge 1$. We define the sum
\[                                                   
s(n)  = \sum_{i=1}^{n-1}i
\]
and define a matrix $A(n)$ with $s(n)$ rows and $n$ columns inductively.
For $n=1$, let $A(1)$ denote the empty matrix, i.e., a matrix without entries.
Assume the matrix $A(n-1)$ had been defined with $s(n-1)$ rows 
and $n-1$ columns. Then we set

{
\renewcommand{\arraystretch}{2.0}     
\[
A(n) = 
\begin{pmatrix}    
\begin{array}{c|c}
\mathbf{1}_{n-1} & - U_{n-1} \\
\hline
\mathbf{0}_{s(n-1)} & A(n-1)
\end{array}
\end{pmatrix}.
\]
}

We extend the matrix $A(n)$ to a matrix $A$ with $n \cdot s(n)$ rows and 
$n^2$ columns. The matrix $A$ consists in the ``main diagonal" of $n$ 
matrices $A(n)$ and the remaining values are set to zero. The matrix $A$ 
depends on the value $n$, but we do not state this dependence explicitly. 

Given the set $\{1, \ldots, n^2\} \subset \mathds{Z}$, let $\pi$ be any
permutation on this set, i.e., 
\[
\pi: \{1, \ldots, n^2\} \longrightarrow \{1, \ldots, n^2\}
\]
be a permutation. We extend the notion of permutation to the matrix $A$,  
i.e., we define $\pi (A) = (a^{\pi^{-1}(1)}, \ldots, a^{\pi^{-1}(n^2)})$, 
where $a^j$ denotes the $j^{th}$ column of $A$ for $j = 1, \ldots , n^2$. 
Given a permutation $\pi$ on $\{1, \ldots, n^2\}$, we define the matrix 
$A_\pi = \pi (A)$, i.e., we interchange the columns of $A$ according to 
the permutation $\pi$. 

\begin{definition} \label{D:nonzero}
Let $s \ge 1$. For any point $y = (y_1, \ldots, y_s)^T \in \mathds{Z}^s$
we write $y < > \mathbf{0}$ if each component of $y$ is nonzero, 
i.e., if $y_i \ne 0$ for $i=1, \ldots, s$.
\end{definition}

This definition should not be confused with the expression $y \ne \mathbf{0}$, 
where only one component of $y$ has to be nonzero.

Given is $n \ge 2$, some permutations $\pi_1$, $\pi_2$, $\pi_3$ on $\{1, \ldots, n^2\}$, 
some $0 \le k \le n^2$, an index set $\{i_1, \ldots, i_k \} \subset \{ 1, \ldots, n^2\}$, 
and givens $g_{i_1}, \ldots , g_{i_k} \in\mathds{Z}$ with $1 \le g_{i_l} \le n$ for 
$l = 1, \ldots, k$.

Let $A_{eq}$ be the $k \times n^2$ matrix, which consists of the rows $i_1, \ldots, i_k$
of the identity matrix $U_{n^2}$, i.e., the $i_l^{th}$ component of the $l^{th}$ row of $A_{eq}$ is 
equal to $1$ (and zero otherwise). In other words $A_{eq}$ is defined by $[A_{eq}x]_l = x_{i_l}$
for each $x = (x_1, \ldots, x_{n^2})^T \in \mathds{Z}^{n^2}$ and $l=1, \ldots, k$.
The vector of ones $\mathbf{1}_{n^2}$ is mapped to the vector of ones $\mathbf{1}_{k}$
by $A_{eq}$, i.e., $A_{eq}\mathbf{1}_{n^2} = \mathbf{1}_{k}$. 
Define $g=(g_{i_1}, \ldots , g_{i_k})^T \in\mathds{Z}^k$.

Now, we are in position to state the primal problem.
\begin{align*}
(PP) \qquad
& \mbox{Find }x = (x_1, \ldots, x_{n^2})^T \in \mathds{Z}^{n^2} \mbox{ such that } \\
& 1 \le x_i \le n \mbox{ for }i = 1, \ldots , n^2, \\
& A_{\pi_r} x <> \mathbf{0} \mbox{ for }r=1, 2, 3 \mbox{ and} \\
& A_{eq}x = g. 
\end{align*}

We restrict ourselves to this mathematical model and do not refer directly to the classical Sudoku 
puzzle. In particular, we will not investigate the relation of this model to the Sudoku puzzle
in detail. This had been described in \cite{Fis} already.

Another problem modeling Sudoku had been introduced by Kaibel and Koch \cite{KK}. Their linear model 
consisted of 0-1-variables and contained equality constraints. The same type of problem had been considered 
by Provan \cite{Pro}. Both did not consider duality properties.

                                        %
                                        %
\section{The Dual Problem} \label{S:DP}
We introduce the dual problem.
\begin{align*}
(DP) \qquad
& \mbox{Find }\lambda \in \{-1, +1\}^{n \cdot s(n)} \mbox{ such that } \\
& A_{\pi_r}A^T_{\pi_1}\lambda <> \mathbf{0} \mbox{ for }r = 1, 2, 3 \mbox{ and } \\
& A_{eq}A^T_{\pi_1}\lambda = 2 g - (n+1) \mathbf{1}_k. 
\end{align*}

The last condition $A_{eq}A^T_{\pi_1}\lambda = 2 g - (n+1) \mathbf{1}_k$ can be 
reformulated using a componentwise description
\[
[A^T_{\pi_1}\lambda]_{i_l} = 2 g_{i_l} - (n+1) \mbox{ for }l=1, \ldots, k.
\]

The dual problem is closely related to the generalized sign function introduced in \cite{Fis}.
The generalized sign function is based on the classical sign function and is also denoted by $sgn$.

\begin{definition}     \label{D:sgn}
Let $s \ge 1$. For any point $y = (y_1, \ldots, y_s)^T \in \mathds{Z}^s$ 
with $y<> \mathbf{0}$ we define the generalized sign function 
$sgn: \mathds{Z}^s \longrightarrow \mathds{Z}^s $ by
\[
sgn ( y ) =
\begin{pmatrix}
sgn ( y_1 ) \\
\vdots \\
sgn ( y_s )
\end{pmatrix}.
\]
\end{definition}

We continue with some preparing lemmas.

\begin{lemma} \label{L:31}
Let $\pi$ be a permutation on $\{1,  \ldots, n^2\}$ and let 
$x = (x_1, \ldots, x_{n^2})^T \in \mathds{Z}^{n^2}$, such that $1 \le x_i \le n$ for 
$i=1, \ldots, n^2$ and $A_\pi x <> \mathbf{0}$. Then $\lambda = sgn( A_{\pi_1} x)$
satisfies $\lambda \in \{-1, +1\}^{n \cdot s(n)}$ and $A_\pi A^T_{\pi_1}\lambda <> \mathbf{0}$. 
\end{lemma}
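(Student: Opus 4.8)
The plan is to unwind the recursive definition of $A(n)$ into its underlying pairwise-difference structure, reduce the second assertion to one combinatorial identity about the sign function, and deal with the column permutations by routine bookkeeping. First I would prove, by induction on $n$, that the rows of $A(n)$ are exactly the $s(n)$ vectors $e_i-e_j$ with $1\le i<j\le n$ (writing $e_1,\dots,e_m$ for the standard unit vectors of $\mathds{Z}^m$): in the inductive step $(\mathbf 1_{n-1}\mid -U_{n-1})$ contributes the rows $e_1-e_j$, $j=2,\dots,n$, and $(\mathbf 0_{s(n-1)}\mid A(n-1))$ contributes, by the inductive hypothesis applied to the last $n-1$ columns, the rows $e_i-e_j$ with $2\le i<j\le n$, the counts matching since $(n-1)+s(n-1)=s(n)$. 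Two consequences will be used repeatedly: for $z\in\mathds{Z}^n$ the components of $A(n)z$ are precisely the differences $z_i-z_j$ over $i<j$; and every row of $A$ has coordinate-sum zero, so $A\mathbf 1_{n^2}=\mathbf 0$, hence $A_\pi\mathbf 1_{n^2}=\mathbf 0$ for every permutation $\pi$ since permuting columns does not change row sums.

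Next I would introduce the $n^2\times n^2$ permutation matrix $P$ with $A_{\pi_1}=AP$, so that $y:=Px$ (a rearrangement of the entries of $x$) satisfies $A_{\pi_1}x=Ay$ and $1\le y_i\le n$; write $y^{(1)},\dots,y^{(n)}\in\mathds{Z}^n$ for the blocks on which the block-diagonal $A$ acts separately. The vector $\lambda=sgn(A_{\pi_1}x)$ can only be formed once $A_{\pi_1}x=Ay<>\mathbf 0$ (Definition~\ref{D:sgn}); this is implicit in the statement and, in any case, holds whenever $x$ solves $(PP)$. Granting it, each $y^{(b)}$ is a tuple of $n$ distinct integers in $\{1,\dots,n\}$, i.e.\ a permutation of $(1,\dots,n)$. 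Then every difference $y^{(b)}_i-y^{(b)}_j$ with $i\ne j$ is nonzero, so $sgn$ sends each component of $Ay$ to $\pm1$, giving $\lambda\in\{-1,+1\}^{n\cdot s(n)}$, which is the first assertion.

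The core of the proof is the identity $A(n)^T\,sgn(A(n)z)=2z-(n+1)\mathbf 1_n$ for any permutation $z$ of $(1,\dots,n)$ (note $A(n)z<>\mathbf 0$ here, so the left side makes sense). Its $k$-th component, computed from the row description of $A(n)$, is
\[
\sum_{j>k} sgn(z_k-z_j)-\sum_{i<k} sgn(z_i-z_k)=\sum_{\ell\ne k} sgn(z_k-z_\ell)=\#\{\ell:z_\ell<z_k\}-\#\{\ell:z_\ell>z_k\},
\]
which equals $(z_k-1)-(n-z_k)=2z_k-(n+1)$ since $z$ takes each value in $\{1,\dots,n\}$ once. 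Applying this blockwise to $y$ yields $A^T sgn(Ay)=2y-(n+1)\mathbf 1_{n^2}$, so
\[
A^T_{\pi_1}\lambda=(AP)^T sgn(Ay)=P^T\bigl(2y-(n+1)\mathbf 1_{n^2}\bigr)=2P^TPx-(n+1)\mathbf 1_{n^2}=2x-(n+1)\mathbf 1_{n^2},
\]
using $P^T\mathbf 1_{n^2}=\mathbf 1_{n^2}$ and $P^TP=U_{n^2}$. Finally $A_\pi A^T_{\pi_1}\lambda=2A_\pi x-(n+1)A_\pi\mathbf 1_{n^2}=2A_\pi x$ by the first paragraph, and $2A_\pi x<>\mathbf 0$ because $A_\pi x<>\mathbf 0$ by hypothesis; this is the second assertion.

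The main obstacle is the identity $A(n)^T sgn(A(n)z)=2z-(n+1)\mathbf 1_n$: one has to extract the $\{e_i-e_j\}$ description from the inductive definition of $A(n)$ and then keep the signs in the double sum straight (the step $-\sum_{i<k} sgn(z_i-z_k)=\sum_{i<k} sgn(z_k-z_i)$ is where an error is easiest to make). The remaining permutation bookkeeping is routine, but one must be careful that it is $A^T_{\pi_1}\lambda$, not $A^T\lambda$, that equals $2x-(n+1)\mathbf 1_{n^2}$, since $A_\pi$ is applied to it afterwards.
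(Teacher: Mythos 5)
Your proof is correct, and at the top level it follows the same route as the paper: establish $A_{\pi_1}^T\,sgn(A_{\pi_1}x)=2x-(n+1)\mathbf{1}_{n^2}$, then apply $A_\pi$ and kill the constant term with $A_\pi\mathbf{1}_{n^2}=\mathbf{0}$. The difference is that the paper's proof is a three-line computation that simply cites \cite[Theorem 5.1]{Fis} for that identity, whereas you prove the identity from scratch: you extract the description of the rows of $A(n)$ as the pairwise differences $e_i-e_j$, $i<j$, reduce to the rank-counting identity $\sum_{\ell\ne k}sgn(z_k-z_\ell)=2z_k-(n+1)$ for a permutation $z$ of $(1,\dots,n)$, and handle the column permutation via $A_{\pi_1}=AP$; all of these steps check out. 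What your version buys is self-containedness, and it makes visible where the hypotheses $1\le x_i\le n$ and $A_{\pi_1}x<>\mathbf{0}$ actually enter (they force each block of $Px$ to be a permutation of $(1,\dots,n)$); what the paper's version buys is brevity, at the cost of a black box. You are also right to note that the statement only assumes $A_\pi x<>\mathbf{0}$ for a generic $\pi$ while $\lambda=sgn(A_{\pi_1}x)$ needs $A_{\pi_1}x<>\mathbf{0}$ to be defined; the paper's own proof glosses over the same point, and it is harmless in the only place the lemma is invoked (Theorem \ref{T:31}, where $A_{\pi_r}x<>\mathbf{0}$ holds for all three $r$).
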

\begin{proof} 
The property $\lambda \in \{-1, +1\}^{n \cdot s(n)}$ follows from $A_\pi x <> \mathbf{0}$ and 
the definition of $sgn$. Using \cite[Theorem 5.1]{Fis} and the equation
$A_{\pi}\mathbf{1}_{n^2} = \mathbf{0}$ we obtain
\[
\begin{array}{lll}
A_\pi A_{\pi_1}^T \lambda 
& = &A_\pi A_{\pi_1}^T sgn( A_{\pi_1} x) \\
& = &A_\pi (A_{\pi_1}^T sgn( A_{\pi_1} x) + (n+1) \mathbf{1}_{n^2}) 
- (n+1) A_\pi \mathbf{1}_{n^2} \\
& = &2 A_\pi x \\
& <> &\mathbf{0},
\end{array}
\]
which is the desired result.
\end{proof} 

\begin{lemma} \label{L:32}
Let $\pi$ be a permutation on $\{1, \ldots, n^2\}$ and let $\lambda$ be a point in 
$\{-1, +1\}^{n \cdot s(n)}$. Then $-(n-1) \le [A_\pi^T \lambda]_i \le n-1$ 
for $i=1, \ldots, n^2$.
\end{lemma}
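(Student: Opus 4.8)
The plan is to reduce the statement to an elementary fact about the columns of $A$: \emph{every column of $A$ has all entries in $\{-1,0,+1\}$, and exactly $n-1$ of them are nonzero.} Granting this, the estimate is immediate. Indeed, $A_\pi$ is obtained from $A$ only by permuting columns, so for each $i$ the $i^{th}$ column $a$ of $A_\pi$ is a column of $A$; writing $a = (a_1, \ldots, a_{n\cdot s(n)})^T$ and using $|\lambda_m| = 1$ for every $m$ (because $\lambda \in \{-1,+1\}^{n\cdot s(n)}$), we obtain
\[
\bigl| [A_\pi^T\lambda]_i \bigr| \;=\; \Bigl| \sum_{m} a_m \lambda_m \Bigr| \;\le\; \sum_{m} |a_m|\,|\lambda_m| \;=\; \sum_{m} |a_m| \;=\; n-1 ,
\]
which gives $-(n-1) \le [A_\pi^T\lambda]_i \le n-1$.

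So the actual work is the claim about the columns of $A$. Since $A$ is block diagonal with $n$ copies of $A(n)$ on the main diagonal and zeros elsewhere, every column of $A$ is a column of one copy of $A(n)$ padded with zeros; hence it suffices to show that every column of $A(n)$ lies in $\{-1,0,+1\}^{s(n)}$ and has exactly $n-1$ nonzero entries. I would prove this by induction on $n$ from the recursive definition of $A(n)$. For $n=2$ we have $A(2) = (\,1 \ \ -1\,)$, and each of its two columns has exactly one nonzero entry, equal to $\pm 1$. For the step, the first column of $A(n)$ is $(\mathbf{1}_{n-1}, \mathbf{0}_{s(n-1)})^T$, with $n-1$ nonzero entries, all equal to $1$; and for $j = 2, \ldots, n$ the $j^{th}$ column of $A(n)$ is the vector whose first $n-1$ coordinates form the $(j-1)^{th}$ column of $-U_{n-1}$ (a single entry equal to $-1$) stacked on top of the $(j-1)^{th}$ column of $A(n-1)$, which by the induction hypothesis lies in $\{-1,0,+1\}$ and has $(n-1)-1 = n-2$ nonzero entries. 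Together, this column has entries in $\{-1,0,+1\}$ and exactly $n-1$ nonzero ones, closing the induction.

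The argument uses no property of $\lambda$ beyond $|\lambda_m| = 1$ and no property of the permutation $\pi$ at all. The only place needing a little care is the index bookkeeping in the inductive step and the treatment of the degenerate base data — in particular, the ``empty'' matrix $A(1)$ fits the pattern vacuously, its single column having $s(1) = 0$ rows and hence $0 = 1-1$ nonzero entries, so one may equally well start the induction at $n=1$. I do not anticipate any real obstacle: the lemma is a counting estimate, and the bound $n-1$ is exactly the maximal sum of absolute values over the columns of $A$.
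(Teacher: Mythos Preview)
Your proof is correct. Both you and the paper reduce to $A(n)$ and argue by induction on $n$ through the recursive block structure, so the skeleton is the same; the difference is in packaging. The paper bounds $[A(n)^T\lambda]_i$ directly in the inductive step, splitting into the cases $i=1$ (where it is $\sum_{j=1}^{n-1}\lambda_j$) and $2\le i\le n$ (where it is $-\lambda_{i-1}$ plus an $A(n-1)^T$-term handled by the induction hypothesis). You instead isolate a structural fact---each column of $A(n)$ lies in $\{-1,0,+1\}^{s(n)}$ with exactly $n-1$ nonzero entries---and then finish in one line via the triangle inequality. Your route makes transparent \emph{why} the bound is $n-1$ (it is the column $\ell_1$-norm) and yields a slightly stronger statement as a byproduct; the paper's route is marginally more self-contained in that it never names the column count explicitly. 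Either way the reduction from $A$ to $A(n)$ (block diagonality) and from $A_\pi$ to $A$ (column permutation) is identical.
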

\begin{proof} 
We divide the proof of this lemma into three steps. First we prove it for the matrix $A(n)$, 
then for $A$ and, finally, for $A_\pi$. The first claim reads as  
$-(n-1) \le [A(n)^T \lambda]_i \le n-1$ for $\lambda \in \{-1, +1\}^{s(n)}$ and 
$i=1, \ldots, n$. We prove this claim by induction on $n \ge 2$ and start with $n=2$.
Consider
\[
[A(2)^T \lambda ]_i = [(+1 -1)^T \lambda ]_i = 
\begin{cases}
\lambda, 
& \mbox{if } i=1  \\
- \lambda,
& \mbox{if } i=2 
\end{cases}
\]
for $\lambda \in \{-1, +1\}^1$ and $i=1, 2$. The induction claim is true for $n=2$.
Assume the induction claim had been proved for $n-1$. Consider
\[
[A(n)^T \lambda ]_i =  
\begin{cases}
\sum\limits_{j=1}^{n-1} \lambda_j,
& \mbox{if } i=1 \\
- \lambda_{i-1} + [A(n-1)^T (\lambda_n, \ldots, \lambda_{s(n)})^T]_{i-1}, 
& \mbox{if } 2 \le i \le n  
\end{cases}
\]
for $\lambda = (\lambda_1, \ldots, \lambda_{s(n)})^T \in \{-1, +1\}^{s(n)}$ and 
$i=1, \ldots, n$. The first expression satisfies $-(n-1) \le \sum_{j=1}^{n-1} \lambda_j \le n-1$
for $(\lambda_1, \ldots, \lambda_{s(n)})^T \in \{-1, +1\}^{s(n)}$. Using $s(n-1) = s(n) - (n-1)$
and the induction hypothesis,
\begin{align*}
-(n-1)
& = - 1 - (n-2) \\
& \le - \lambda_{i-1} + [A(n-1)^T (\lambda_n, \ldots, \lambda_{s(n)})^T]_{i-1} \\
& \le 1 + (n-2) \\
& = n-1
\end{align*}
for $\lambda = (\lambda_1, \ldots, \lambda_{s(n)})^T \in \{-1, +1\}^{s(n)}$ and 
$i=2, \ldots, n$. This shows the induction claim.

We extend this claim to the matrix $A$, which contains the matrices $A(n)$ in the diagonal. 
Let $\lambda = (\lambda_1, \ldots, \lambda_{n \cdot s(n)})^T\in \{-1, +1\}^{n \cdot s(n)}$
and let $i \in \{1, \ldots, n^2\}$. There exists some $j \in \{1, \ldots, n\}$, such that
\[
[A^T \lambda]_i = [A(n)^T (\lambda_{(j-1)n+1}, \ldots, \lambda_{j \cdot n})^T]_i
\]
and this term satisfies the desired inequality.

The matrix $A_\pi^T$ is a permutation of the rows of $A^T$ and this completes the 
proof of the lemma.
\end{proof} 

\begin{lemma} \label{L:33}
Let $\pi$ be a permutation on $\{1,  \ldots, n^2\}$ and let $\lambda$ be a point in 
$\{-1, +1\}^{n \cdot s(n)}$, 
such that $A_\pi A^T_{\pi_1}\lambda <> \mathbf{0}$. Then $x = (x_1, \ldots, x_{n^2})^T 
= \frac{1}{2}(A_{\pi_1}^T\lambda + (n+1) \mathbf{1}_{n^2})$ satisfies 
$x \in \mathds{Z}^{n^2}$, $1 \le x_i \le n$ for $i=1, \ldots, n^2$ and 
$A_\pi x <> \mathbf{0}$.
\end{lemma}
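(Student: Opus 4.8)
The plan is to verify the three asserted properties of $x = \frac{1}{2}(A_{\pi_1}^T\lambda + (n+1)\mathbf{1}_{n^2})$ essentially by reversing the computation in Lemma \ref{L:31}. The integrality $x \in \mathds{Z}^{n^2}$ is the natural starting point: since $\lambda \in \{-1,+1\}^{n\cdot s(n)}$, each component $[A_{\pi_1}^T\lambda]_i$ is an integer, and one checks it has the same parity as $n+1$ so that the division by $2$ produces an integer. This parity claim should be proved in the same three-step fashion as Lemma \ref{L:32} (first for $A(n)$ by induction on $n$, then for $A$ using the block-diagonal structure, then for $A_\pi$ since permuting rows does not change parities); alternatively one observes that each component of $A_{\pi_1}^T\lambda$ is a sum of $\pm 1$'s whose number of summands is fixed by the column structure, and $n+1$ matches that count modulo $2$.

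Next I would establish the bounds $1 \le x_i \le n$. By Lemma \ref{L:32} applied to $\pi_1$ we have $-(n-1) \le [A_{\pi_1}^T\lambda]_i \le n-1$ for each $i$, hence
\[
1 = \tfrac{1}{2}\big(-(n-1) + (n+1)\big) \le x_i \le \tfrac{1}{2}\big((n-1)+(n+1)\big) = n,
\]
which is exactly the required range. This step is immediate once Lemma \ref{L:32} is invoked.

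Finally, for $A_\pi x <> \mathbf{0}$ I would compute directly, using $A_\pi \mathbf{1}_{n^2} = \mathbf{0}$ (which holds because each $A(n)$ annihilates $\mathbf{1}_n$ and permutation preserves this):
\[
A_\pi x = \tfrac{1}{2}\big(A_\pi A_{\pi_1}^T\lambda + (n+1)A_\pi\mathbf{1}_{n^2}\big) = \tfrac{1}{2}\,A_\pi A_{\pi_1}^T\lambda,
\]
and by hypothesis $A_\pi A_{\pi_1}^T\lambda <> \mathbf{0}$, so $A_\pi x <> \mathbf{0}$ as well since scaling by $\tfrac12$ preserves nonvanishing of each component. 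The main obstacle is the parity argument needed for integrality: unlike the bound in Lemma \ref{L:32}, parity is not stated anywhere earlier, so I expect to spend most of the proof setting up the induction on $A(n)$ to show that every component of $A(n)^T\lambda$ is congruent to $n+1$ modulo $2$ (equivalently, to $n-1$), and then lifting this to $A$ and to $A_{\pi_1}$; the other two properties follow quickly from Lemma \ref{L:32} and the relation $A_\pi\mathbf{1}_{n^2}=\mathbf{0}$.
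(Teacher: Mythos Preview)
Your plan is correct and mirrors the paper's proof almost exactly for the second and third parts: the paper also deduces $1\le x_i\le n$ directly from Lemma~\ref{L:32} and obtains $A_\pi x = \tfrac12 A_\pi A_{\pi_1}^T\lambda <>\mathbf{0}$ from $A_\pi\mathbf{1}_{n^2}=\mathbf{0}$.

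The one point of divergence is integrality. You anticipate that most of the work will go into a parity argument (showing each column of $A(n)$ has exactly $n-1$ nonzero entries, so $[A_{\pi_1}^T\lambda]_i$ is a sum of $n-1$ values in $\{-1,+1\}$ and hence congruent to $n+1$ modulo $2$). The paper, by contrast, dispatches integrality in a single sentence, simply asserting that $x$ has integer components ``since all defining variables consist of integer components''; it does not spell out the parity check needed to justify the division by $2$. Your approach is therefore more rigorous on this step, at the cost of some extra work that the paper leaves implicit. If you want to shorten it, the cleanest version of your parity observation is to note (by the same three-step reduction as in Lemma~\ref{L:32}, or directly by induction) that every column of $A(n)$ has exactly $n-1$ nonzero entries, each equal to $\pm1$, so $[A_{\pi_1}^T\lambda]_i \equiv n-1 \equiv n+1 \pmod 2$.
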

\begin{proof} 
The point $x$ consists of integer components, since all defining variables consist of integer
components. 

Using Lemma \ref{L:32}, $A_{\pi_1}^T \lambda$ satisfies
\[
- (n-1) \le [A_{\pi_1}^T \lambda]_i \le n-1
\]
for $i=1, \ldots, n^2$. Adding $n+1$, yields
\[
2 \le [A_{\pi_1}^T \lambda + (n+1)\mathbf{1}_{n^2}]_i \le 2 n
\]
for $i=1, \ldots, n^2$ and this shows $1 \le x_i \le n$ for $i=1, \ldots, n^2$.

Using the equation $A_\pi \mathbf{1}_{n^2} = \mathbf{0}$, we obtain
\[
\begin{array}{lll}
A_\pi x 
& = &\frac{1}{2}(A_\pi A_{\pi_1}^T \lambda + (n+1)A_\pi \mathbf{1}_{n^2}) \\
& = &\frac{1}{2}A_\pi A_{\pi_1}^T \lambda \\
& <> &\mathbf{0},
\end{array}
\]
which completes the proof.
\end{proof}

Usually duality results are stated in the following sense: If there exists a primal 
feasible point and a dual feasible point and the optimal values are equal, then the primal feasible 
point solves the primal problem and the dual feasible point solves the dual problem.

Sometimes duality results are stated in another way in the literature (compare Chv\'{a}tal 
\cite[Theorem 5.1]{Chv}): If the primal problem is solvable, then the dual problem is solvable 
and the optimal values are equal. 

The relation between the primal problem and the dual problem is examined in the next theorem
and the formulation is of the second type. If the primal problem is solvable, then the dual problem 
is solvable and there exists an explicit formula for the dual solution. An analogous statement
holds for the dual problem.

\begin{theorem} \label{T:31}
The following statements hold: \\
(i) If $x$ solves $(PP)$, then $\lambda = sgn (A_{\pi_1} x)$ solves $(DP)$. \\
(ii) If $\lambda$ solves $(DP)$, then 
$x = \frac{1}{2}(A_{\pi_1}^T\lambda + (n+1) \mathbf{1}_{n^2})$ solves $(PP)$.
\end{theorem}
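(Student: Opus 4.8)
The plan is to prove the two implications by directly assembling the lemmas that have been set up. Essentially everything has already been packaged; the theorem is a ``gluing'' statement, and the work is to check that the hypotheses of each lemma are met and that the extra constraints ($A_{eq}$ and the $<>\mathbf{0}$ conditions for all three permutations) survive the passage between $x$ and $\lambda$.

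For part (i), assume $x$ solves $(PP)$, so $1\le x_i\le n$ for all $i$, $A_{\pi_r}x<>\mathbf{0}$ for $r=1,2,3$, and $A_{eq}x=g$. Set $\lambda=sgn(A_{\pi_1}x)$. First I would invoke Lemma \ref{L:31} three times, once for each $\pi=\pi_r$: since $A_{\pi_r}x<>\mathbf{0}$ (and $A_{\pi_1}x<>\mathbf{0}$ in particular, which is needed for $sgn$ to be defined and for $\lambda$ to land in $\{-1,+1\}^{n\cdot s(n)}$), the lemma yields $\lambda\in\{-1,+1\}^{n\cdot s(n)}$ and $A_{\pi_r}A^T_{\pi_1}\lambda<>\mathbf{0}$ for each $r$. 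This is the first two dual constraints. The remaining task is the dual equality constraint. Here I would reuse the computation from the proof of Lemma \ref{L:31}: by \cite[Theorem 5.1]{Fis}, $A^T_{\pi_1}sgn(A_{\pi_1}x)+(n+1)\mathbf{1}_{n^2}=2x$, i.e. $A^T_{\pi_1}\lambda=2x-(n+1)\mathbf{1}_{n^2}$. Applying $A_{eq}$ and using $A_{eq}\mathbf{1}_{n^2}=\mathbf{1}_k$ together with $A_{eq}x=g$ gives $A_{eq}A^T_{\pi_1}\lambda=2g-(n+1)\mathbf{1}_k$, which is exactly the dual equality. Hence $\lambda$ solves $(DP)$.

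For part (ii), assume $\lambda$ solves $(DP)$, so $\lambda\in\{-1,+1\}^{n\cdot s(n)}$, $A_{\pi_r}A^T_{\pi_1}\lambda<>\mathbf{0}$ for $r=1,2,3$, and $A_{eq}A^T_{\pi_1}\lambda=2g-(n+1)\mathbf{1}_k$. Set $x=\frac12(A^T_{\pi_1}\lambda+(n+1)\mathbf{1}_{n^2})$. Applying Lemma \ref{L:33} three times, once per $\pi=\pi_r$, gives $x\in\mathds{Z}^{n^2}$, $1\le x_i\le n$ for all $i$, and $A_{\pi_r}x<>\mathbf{0}$ for $r=1,2,3$. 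That covers the first two groups of primal constraints. For the primal equality, apply $A_{eq}$ to the definition of $x$: $A_{eq}x=\frac12(A_{eq}A^T_{\pi_1}\lambda+(n+1)A_{eq}\mathbf{1}_{n^2})=\frac12((2g-(n+1)\mathbf{1}_k)+(n+1)\mathbf{1}_k)=g$, using $A_{eq}\mathbf{1}_{n^2}=\mathbf{1}_k$. Thus $x$ solves $(PP)$.

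I do not expect a genuine obstacle; the proof is essentially bookkeeping. The one point that needs care is that Lemmas \ref{L:31} and \ref{L:33} are stated for a single generic permutation $\pi$, whereas $(PP)$ and $(DP)$ impose the $<>\mathbf{0}$ conditions simultaneously for $\pi_1,\pi_2,\pi_3$; I must therefore be explicit that each lemma is applied separately for each of the three permutations (and note that in part (i) the case $r=1$ of $A_{\pi_1}x<>\mathbf{0}$ is what makes $\lambda=sgn(A_{\pi_1}x)$ well-defined with entries in $\{-1,+1\}$). A secondary small point is to confirm that the formulas in (i) and (ii) are genuinely inverse to each other on the feasible sets, which follows from $A^T_{\pi_1}sgn(A_{\pi_1}x)=2x-(n+1)\mathbf{1}_{n^2}$ when $A_{\pi_1}x<>\mathbf{0}$; this is implicitly guaranteed by the lemmas but is worth keeping in mind for consistency of the statement.
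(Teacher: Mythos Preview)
Your proposal is correct and follows essentially the same route as the paper's proof: both parts invoke Lemma~\ref{L:31} (resp.\ Lemma~\ref{L:33}) for each of $\pi_1,\pi_2,\pi_3$ to handle the $<>\mathbf{0}$ constraints, and then verify the $A_{eq}$ constraint by a direct computation using $A_{eq}\mathbf{1}_{n^2}=\mathbf{1}_k$. The only cosmetic difference is that the paper cites \cite[Theorem~5.2]{Fis} rather than \cite[Theorem~5.1]{Fis} for the identity $A_{\pi_1}^T sgn(A_{\pi_1}x)+(n+1)\mathbf{1}_{n^2}=2x$ underlying the equality-constraint step in part~(i).
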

\begin{proof} 
(i) Assume $x = (x_1, \ldots, x_{n^2})^T$ solves $(PP)$. Then $1 \le x_i \le n$ for 
$i=1, \ldots, n^2$, $A_{\pi_r} x <> \mathbf{0}$ for $r=1, 2, 3$ and 
$A_{eq} x = g$. Let $\lambda = sgn(A_{\pi_1}x)$, then $\lambda \in \{-1, +1\}^{n \cdot s(n)}$
and $A_{\pi_r}A_{\pi_1}^T \lambda <> \mathbf{0}$ for $r=1, 2, 3$, by Lemma \ref{L:31}.
Using \cite[Theorem 5.2]{Fis},
\begin{align*}
A_{eq}A_{\pi_1}^T \lambda 
& = A_{eq}A_{\pi_1}^Tsgn( A_{\pi_1} x) \\
& = A_{eq} (A_{\pi_1}^T sgn( A_{\pi_1} x) + (n+1) \mathbf{1}_{n^2}) 
- (n+1) \mathbf{1}_{k} \\
& = 2 A_{eq}x - (n+1) \mathbf{1}_{k} \\
& = 2 g - (n+1) \mathbf{1}_{k},
\end{align*}
i.e., $\lambda$ solves $(DP)$. \\
(ii) Assume $\lambda$ solves $(DP)$. Then $\lambda \in \{-1, +1\}^{n \cdot s(n)}$, 
$A_{\pi_r}A_{\pi_1}^T \lambda <> \mathbf{0}$ for 
$r=1, 2, 3$ and $A_{eq}A_{\pi_1}^T \lambda =2 g - (n+1)\mathbf{1}_k$. Let
\[
x = \frac{1}{2}(A_{\pi_1}^T \lambda + (n+1)\mathbf{1}_{n^2}),
\]
then $x \in \mathds{Z}^{n^2}$, $1 \le x_i \le n$ for $i=1, \ldots, n^2$ and 
$A_{\pi_r} x <> \mathbf{0}$ for $r=1, 2, 3$ by Lemma \ref{L:33}. The equation
\begin{align*}
A_{eq}x 
& = \frac{1}{2}(A_{eq}A_{\pi_1}^T \lambda + (n+1)A_{eq}\mathbf{1}_{n^2}) \\
& = \frac{1}{2}(2 g - (n+1)\mathbf{1}_k + (n+1)\mathbf{1}_k) \\
& = g
\end{align*}
completes, that $x$ solves $(PP)$.
\end{proof} 

\begin{example}
We illustrate Theorem \ref{T:31} (i) with a Sudoku of size $n=4$, i.e., $s(n)=6$. The 
solution $x$ is depicted in Fig. \ref{Fig1}. At this moment it does not matter how the original problem 
had been formulated and where the givens had been located. The point 
$\lambda = sgn( A_{\pi_1}x) \in \{-1, +1\}^{n \cdot s(n)}$ consists of the values
\begin{align*}
\lambda =
& ( -1, +1, +1, +1, +1, -1, \\
& +1, -1, -1, -1, -1, -1, \\
& -1, -1, -1, -1, -1, +1, \\
& +1, +1, +1, +1, +1, +1 )
\end{align*}
and is the corresponding dual solution. This dual point describes the comparison of the values in two 
cells in the same row in Fig. \ref{Fig1}. The first component $-1$ of $\lambda$ describes, that the 
content of cell 1 in row 1 (which is a $3$) is smaller than the content of cell 2 in row 1 (which is a $4$).
\begin{figure}
\includegraphics{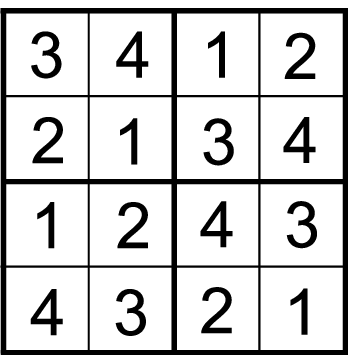}
\caption{A solution of a $4\times4$ Sudoku} 
\label{Fig1}
\end{figure}
\end{example}

This example can also be used to illustrate statement (ii) of Theorem \ref{T:31}. Defining 
$\lambda$ by the series of $+1$s and $-1$s in the example, the point 
$x = \frac{1}{2}(A_{\pi_1}^T\lambda + 5 \cdot \mathbf{1}_{4^2})$ is depicted in Fig. \ref{Fig1}.

The primal and the dual problem as introduced in Section \ref{S:PP} and \ref{S:DP} are constraint 
satisfaction problems and do not possess an objective function. Therefore it is not possible to state 
properties involving duality gaps for these problems. In the next section we replace these problems 
by two optimization problems and derive duality results for these optimization problems.

                                        %
                                        %
\section{The Primal and Dual Optimization Problems} \label{S:opt}
We introduce the primal and the dual optimization problems, which are equivalent to the
primal respectively dual problem. 

The primal optimization problem will consist of points, which have undefined components, reflecting
empty cells in a Sudoku puzzle. We describe these empty cells by the token $\infty$ and we define
$\mathds{Z}_\infty = \mathds{Z} \cup \{ \infty \}$.

When we allow points to possess infinity components, we have to extend several classic notations. 
The addition of two numbers, where one or both may be infinity, is defined as 
$\infty + x = x + \infty = \infty + \infty = \infty$ for $x \in \mathds{Z}$. We define the product 
$0 \cdot \infty = \infty \cdot 0 = 0$ and $x \cdot \infty = \infty \cdot x = \infty$ for each 
$x \in \mathds{Z}$, $x \ne 0$. Based on this extended definition of addition and multiplication, 
we extend implicitly the matrix multiplication to matrices and vectors with possible infinity components.
The token $\infty$ is different to any number, i.e., $\infty \ne x$ and $x \ne \infty$ for each 
$x \in \mathds{Z}$. In particular, $\infty$ is unequal to zero. 

This extension reflects the meaning of $\infty$ as an undefined state. Something defined and 
something undefined creates an undefined result and something undefined is different to
anything defined. The token $\infty$ has nothing to do with the commonly understanding of 
``infinitely large". It is just a placeholder for ``nothing", i.e., an empty cell in the Sudoku square.

In a classical Sudoku puzzle the term $A_{\pi_r}x <> \mathbf{0}$ with $x \in \mathds{Z}_\infty^{9^2}$
(i.e. some of the components of $x$ may be unknown) describes points where each two known values 
in the same row, the same column or the same block are distinct.

We continue with the primal feasible set
\begin{align*}
F_P = \{x=(x_1, \ldots, x_{n^2})^T \in \mathds{Z}_\infty^{n^2} \mid
& \: 1 \le x_i \le n \mbox{ or } x_i = \infty \\
& \: \mbox{for }i = 1, \ldots, n^2, \\
& \: A_{\pi_r}x <> \mathbf{0} \mbox{ for }r=1, 2, 3 \mbox{ and} \\
& \: A_{eq}x = g \}.
\end{align*}

It is easy to construct examples, where $F_P$ is empty and examples where $F_P$ is nonempty.
We define the primal objective function by $f_P(x) = \sharp \{1 \le i \le n^2 \mid x_i = \infty \}$
for each $x=(x_1, \ldots, x_{n^2})^T \in \mathds{Z}_\infty^{n^2}$. The primal objective 
function is bounded from below by $0$. The primal optimization problem is defined by
\[
(PP_{opt}) \qquad \mbox{Minimize }f_P(x) \mbox{ subject to }x \in F_P.
\]

The relevance of the primal optimization problem is the equivalence to the original primal problem
and the possible definition of solution methods for the generalized Sudoku problem.
A common strategy for solving a Sudoku puzzle creates points contained in the feasible set
of the primal optimization problem. These points consist of unpopulated cells and distinct values 
in the populated cells of each row, column and block. 

This type of solution algorithm had been proposed by Crook \cite{Cro}. His algorithm defines 
in each step a new feasible point with a lower value in the objective function until a solution 
is reached. By definition of the primal optimization problem the condition lower value of the objective 
function means the new point contains at least one more populated cell.

The primal optimal (minimal) value $min\{f_P(x)\mid x \in F_P\}$ of this optimization problem is 
denoted by $v_P$ and satisfies $v_P \ge 0$. A point $x \in F_P$ with $f_P(x) = v_P$ is called a 
solution of the primal optimization problem. 

\begin{theorem} \label{T:41}
If $F_P \ne \emptyset$, then $(PP_{opt})$ is solvable.
\end{theorem}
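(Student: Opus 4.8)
The statement to prove is that if $F_P \ne \emptyset$, then $(PP_{opt})$ attains its infimum. Since $f_P$ takes values in $\{0, 1, \ldots, n^2\}$ — it literally counts the $\infty$-components of a point in $\mathds{Z}_\infty^{n^2}$ — its range on any nonempty subset of $\mathds{Z}_\infty^{n^2}$ is a nonempty subset of a finite set, hence has a minimum. That minimum value $v_P$ is then by definition attained by some $x \in F_P$. So the only content here is the observation that the objective function has finite (indeed discrete and bounded) range, which makes the question of solvability trivial once $F_P$ is nonempty.

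**Steps, in order.**

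First I would note that $F_P \subseteq \mathds{Z}_\infty^{n^2}$ and recall the definition $f_P(x) = \sharp\{1 \le i \le n^2 \mid x_i = \infty\}$. Second, observe that for every $x \in \mathds{Z}_\infty^{n^2}$ we have $0 \le f_P(x) \le n^2$, since the set being counted is a subset of $\{1, \ldots, n^2\}$; in particular the image $f_P(F_P)$ is contained in the finite set $\{0, 1, \ldots, n^2\}$. Third, since $F_P \ne \emptyset$, the set $f_P(F_P)$ is a nonempty subset of this finite set, hence possesses a minimum element; call it $v_P$ (consistent with the notation already introduced just before the theorem). Fourth, by definition of minimum there exists $x^\ast \in F_P$ with $f_P(x^\ast) = v_P$, and such an $x^\ast$ is, by the definition given in the text, a solution of $(PP_{opt})$. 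Hence $(PP_{opt})$ is solvable.

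**Anticipated obstacle.**

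Honestly, there is no real obstacle: the argument is a one-line finiteness observation, and the main "work" is simply citing the definitions of $f_P$, $F_P$, and "solution" correctly and noting that a nonempty subset of a finite totally ordered set has a minimum. The only thing to be a little careful about is the bookkeeping with the token $\infty$ — one should make clear that $f_P$ is well-defined on all of $\mathds{Z}_\infty^{n^2}$ (the counting set is perfectly well-defined even with the extended arithmetic) and takes ordinary integer values, so the minimization is over a finite range and nothing about the extended addition/multiplication conventions interferes. I expect the author's proof to be essentially this short argument.
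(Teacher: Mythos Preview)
Your proposal is correct and matches the paper's approach: the paper's proof is a single sentence observing that $f_P$ is bounded from below by zero and attains only integer values, which is exactly the finiteness/discreteness argument you outline (you additionally note the upper bound $n^2$, but this is not needed once one has a lower bound and integrality).
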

\begin{proof}
The primal objective function $f_P$ is bounded from below by zero and attains only integer values.
\end{proof}

The primal optimal value $v_P = 0$ if and only if one (or each) solution 
$x=(x_1, \ldots, x_{n^2})^T$ of $(PP_{opt})$ satisfies $x_i \ne \infty$ for 
$i=1, \ldots , n^2$. We describe the relation between the primal problem and the 
primal optimization problem. This relation follows in a straightforward manner from the 
definitions of the corresponding problems.

\begin{theorem} \label{T:42}
Let $x \in \mathds{Z}^{n^2}_\infty$. The following statements are equivalent: \\
(i) $x$ solves $(PP)$. \\
(ii) $x$ solves $(PP_{opt})$ and the primal optimal value $v_P = 0$.
\end{theorem}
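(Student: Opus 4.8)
The plan is to prove the two implications separately, in each case simply unwinding the definitions of $(PP)$, of $(PP_{opt})$, and of the primal objective function $f_P$. The key observation is that the feasible set $F_P$ of $(PP_{opt})$ differs from the solution set of $(PP)$ only in that $F_P$ permits components equal to $\infty$, while $(PP)$ forbids them; and that $f_P(x)$ counts precisely the number of such $\infty$-components.

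First I would prove (i) $\Rightarrow$ (ii). Suppose $x \in \mathds{Z}^{n^2}_\infty$ solves $(PP)$. Since a solution of $(PP)$ lies in $\mathds{Z}^{n^2}$ and satisfies $1 \le x_i \le n$ for all $i$, no component equals $\infty$; hence $x$ satisfies the box constraints of $F_P$, and it satisfies $A_{\pi_r}x <> \mathbf{0}$ for $r=1,2,3$ and $A_{eq}x = g$ by assumption, so $x \in F_P$. Moreover $f_P(x) = \sharp\{1 \le i \le n^2 \mid x_i = \infty\} = 0$. Since $f_P \ge 0$ on all of $\mathds{Z}^{n^2}_\infty$, the value $0$ is minimal, so $v_P = 0$ and $x$ attains it; thus $x$ solves $(PP_{opt})$ with $v_P = 0$.

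Next I would prove (ii) $\Rightarrow$ (i). Suppose $x$ solves $(PP_{opt})$ and $v_P = 0$. Then $x \in F_P$ and $f_P(x) = v_P = 0$, so $\sharp\{1 \le i \le n^2 \mid x_i = \infty\} = 0$, meaning $x_i \ne \infty$ for every $i$; combined with the box constraint of $F_P$ (which gives $1 \le x_i \le n$ or $x_i = \infty$) this forces $1 \le x_i \le n$ for all $i$, so in particular $x \in \mathds{Z}^{n^2}$. The remaining two conditions defining $(PP)$, namely $A_{\pi_r}x <> \mathbf{0}$ for $r=1,2,3$ and $A_{eq}x = g$, hold because $x \in F_P$. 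Hence $x$ solves $(PP)$.

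There is no real obstacle here: the only point requiring the slightest care is that when $v_P = 0$ any solution of $(PP_{opt})$ has all components finite (for the $\Rightarrow$ direction the hypothesis already supplies such an $x$, while for the $\Leftarrow$ direction this is read off from $f_P(x) = 0$), and that the two problems share the constraints $A_{\pi_r}x <> \mathbf{0}$ and $A_{eq}x = g$ verbatim. As the text itself notes, the equivalence follows in a straightforward manner from the definitions, so the proof is essentially a bookkeeping argument matching constraints on each side.
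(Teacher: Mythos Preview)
Your proof is correct and matches the paper's approach: the paper does not write out an explicit argument but simply remarks that the equivalence ``follows in a straightforward manner from the definitions of the corresponding problems,'' which is precisely the bookkeeping you carry out.
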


We proceed with the dual optimization problem. The dual feasible set is denoted by
\[
F_D = \{ \lambda \in \{-1,+1\}^{n \cdot s(n)} \mid 
A_{\pi_r}A_{\pi_1}^T \lambda <> \mathbf{0} \mbox{ for }r=1, 2, 3 \}.
\]
By a special choice of $\pi_3$ it is possible to construct examples, where $F_D = \emptyset$. 
If the primal problem is a classical (solvable) Sudoku puzzle, then the dual feasible set $F_D$ is 
nonempty. The dual objective function is defined by
\[
f_D(\lambda) = \sharp \{1 \le l \le k \mid [A_{eq}A_{\pi_1}^T \lambda]_l = 2 g_{i_l} - (n+1) \} - k
\]
for each $\lambda \in \{-1,+1\}^{n \cdot s(n)}$. The dual objective function is bounded from above 
by $0$. The dual optimization problem is given by
\[
(DP_{opt}) \qquad \mbox{Maximize }f_D(\lambda) \mbox{ subject to }\lambda \in F_D.
\]
The dual optimal (maximal) value $max\{f_D(\lambda) \mid \lambda \in F_D \}$ is denoted by $v_D$
and satisfies $v_D \le 0$. A point $\lambda \in F_D$ with $f_D(\lambda) = v_D$ is called a solution of 
the dual optimization problem. 

\begin{theorem} \label{T:43}
If $F_D \ne \emptyset$, then $(DP_{opt})$ is solvable.
\end{theorem}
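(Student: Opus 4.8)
The plan is to mirror the argument of Theorem \ref{T:41} exactly, since the dual objective function has the same finiteness structure as the primal one. First I would observe that $f_D$ takes only integer values: by definition $f_D(\lambda) = \sharp \{1 \le l \le k \mid [A_{eq}A_{\pi_1}^T \lambda]_l = 2 g_{i_l} - (n+1)\} - k$, which is the difference of a cardinality of a finite set and the fixed integer $k$, hence an integer in the range $\{-k, \ldots, 0\}$. Second, I would note that $f_D$ is bounded from above by $0$, as already remarked in the text preceding the theorem (the counted set is a subset of $\{1, \ldots, k\}$, so its cardinality is at most $k$). Combining these two facts, the image $f_D(F_D) \subseteq \{-k, -k+1, \ldots, 0\}$ is a nonempty (because $F_D \ne \emptyset$) finite set of integers, and therefore attains its maximum; any $\lambda \in F_D$ realizing this maximum is by definition a solution of $(DP_{opt})$, so $(DP_{opt})$ is solvable.

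In fact the hypothesis $F_D \ne \emptyset$ does all the real work: it guarantees the set over which we maximize is nonempty, and the boundedness plus integrality (equivalently, finiteness of the range) guarantees the supremum is achieved rather than merely approached. I would write this in one or two sentences, paralleling the terse proof of Theorem \ref{T:41}: ``The dual objective function $f_D$ is bounded from above by zero and attains only integer values.'' One could even sharpen this to note $F_D$ itself is finite, being a subset of the finite set $\{-1,+1\}^{n \cdot s(n)}$, which makes solvability immediate without invoking boundedness at all; but the boundedness-plus-integrality phrasing keeps the two solvability theorems symmetric in presentation.

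There is essentially no obstacle here — this is a routine well-definedness observation, and the only thing to be careful about is that the extended arithmetic involving $\infty$ plays no role in the dual setting, since $\lambda \in \{-1,+1\}^{n \cdot s(n)}$ has no $\infty$ components and $A_{eq}A_{\pi_1}^T \lambda$ is an ordinary integer vector by Lemma \ref{L:32}. So the proof I would supply is just the one-line analogue of the proof of Theorem \ref{T:41}, perhaps with a parenthetical remark that $F_D$ is a subset of a finite set.

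\begin{proof}
The dual objective function $f_D$ is bounded from above by zero and attains only integer
values. Since $F_D \ne \emptyset$ (and is in fact a subset of the finite set
$\{-1,+1\}^{n \cdot s(n)}$), the maximum is attained.
\end{proof}
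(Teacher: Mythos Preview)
Your proof is correct and matches the paper's own proof essentially verbatim: the paper's argument is the single sentence ``The dual objective function $f_D$ is bounded from above by zero and attains only integer values.'' Your additional remark about the finiteness of $F_D$ is a harmless embellishment.
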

\begin{proof}
The dual objective function $f_D$ is bounded from above by zero and attains only integer values.
\end{proof}

It is possible to characterize dual feasible points in terms of a primal property.

\begin{theorem} \label{T:44}
Let  $x \in \mathds{Z}^{n^2}$, such that $A_{\pi_r} x <> \mathbf{0}$ for $r=1, 2, 3$ and
let $\lambda = sgn (A_{\pi_1} x)$. Then $\lambda \in F_D$. 
\end{theorem}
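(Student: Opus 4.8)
The plan is to recognize that Theorem \ref{T:44} is essentially a corollary of Lemma \ref{L:31}, applied to each of the three permutations $\pi_1, \pi_2, \pi_3$ in turn. Given $x \in \mathds{Z}^{n^2}$ with $A_{\pi_r} x <> \mathbf{0}$ for $r = 1, 2, 3$, the first observation is that although Lemma \ref{L:31} as stated assumes $1 \le x_i \le n$, what that hypothesis is actually used for in the lemma's proof is only the identity $A_\pi \mathbf{1}_{n^2} = \mathbf{0}$ together with \cite[Theorem 5.1]{Fis}, which expresses $A_{\pi_1}^T sgn(A_{\pi_1}x) = 2x - (n+1)\mathbf{1}_{n^2}$. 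I should check whether that cited theorem really needs the box constraint $1 \le x_i \le n$; if it does, then Theorem \ref{T:44} implicitly carries that hypothesis as well, or else one must argue directly. Assuming (as the paper's structure suggests) that Lemma \ref{L:31} can be invoked, the proof is short.

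First I would apply Lemma \ref{L:31} with the permutation $\pi = \pi_1$: since $A_{\pi_1} x <> \mathbf{0}$, the lemma gives both $\lambda = sgn(A_{\pi_1}x) \in \{-1,+1\}^{n \cdot s(n)}$ and $A_{\pi_1} A_{\pi_1}^T \lambda <> \mathbf{0}$. Next I would apply Lemma \ref{L:31} again with $\pi = \pi_2$: the hypothesis $A_{\pi_2} x <> \mathbf{0}$ holds by assumption, and the $\lambda$ in the conclusion of the lemma is always $sgn(A_{\pi_1}x)$ regardless of which $\pi$ is used, so we get $A_{\pi_2} A_{\pi_1}^T \lambda <> \mathbf{0}$ for the same $\lambda$. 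Applying it a third time with $\pi = \pi_3$ yields $A_{\pi_3} A_{\pi_1}^T \lambda <> \mathbf{0}$. Collecting these three conclusions, $\lambda$ satisfies $A_{\pi_r} A_{\pi_1}^T \lambda <> \mathbf{0}$ for $r = 1, 2, 3$, which together with $\lambda \in \{-1,+1\}^{n \cdot s(n)}$ is exactly the definition of $\lambda \in F_D$.

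The only genuine subtlety, and the step I expect to require the most care, is the box-constraint issue just flagged: Theorem \ref{T:44} hypothesizes merely $x \in \mathds{Z}^{n^2}$ with the three $<> \mathbf{0}$ conditions, but \emph{not} $1 \le x_i \le n$, whereas Lemma \ref{L:31} includes that bound. If \cite[Theorem 5.1]{Fis} holds for arbitrary integer $x$ (which is plausible, since $A_{\pi_1}^T sgn(A_{\pi_1}x) + (n+1)\mathbf{1}_{n^2} = 2x$ is an algebraic identity that ought not depend on the range of $x$), then the computation in the proof of Lemma \ref{L:31} goes through verbatim for each $\pi_r$ and nothing more is needed. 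Otherwise one would have to either reprove the relevant identity in the needed generality or add the box hypothesis to the statement. Modulo that check, the whole proof is just three invocations of Lemma \ref{L:31} followed by reading off the definition of $F_D$.
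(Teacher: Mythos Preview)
Your approach is the same as the paper's: the paper's entire proof is the single sentence ``This follows from Lemma~\ref{L:31},'' and your proposal simply unpacks that into three invocations of the lemma (one for each $\pi_r$) followed by reading off the definition of $F_D$.

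Your caution about the missing box constraint $1 \le x_i \le n$ is well placed and is not addressed in the paper either. The identity from \cite[Theorem~5.1]{Fis} used in the proof of Lemma~\ref{L:31} genuinely requires that the entries of $x$ in each $\pi_1$-group be a permutation of $\{1,\ldots,n\}$; for arbitrary integer $x$ one only gets $A_{\pi_1}^T\operatorname{sgn}(A_{\pi_1}x) = 2\rho - (n+1)\mathbf{1}$ where $\rho$ records the within-group ranks of $x$, and $A_{\pi_r}x <> \mathbf{0}$ need not imply $A_{\pi_r}\rho <> \mathbf{0}$. So strictly speaking the theorem as stated needs the box hypothesis (or an equivalent rank-preservation argument), and your flag is a legitimate refinement rather than an overcautious one.
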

\begin{proof}
This follows from Lemma \ref{L:31}.
\end{proof}

\begin{theorem} \label{T:45}
Let $\lambda \in \{-1, +1\}^{n \cdot s(n)}$ and $x \in \mathds{Z}^{n^2}$, such that
$x = \frac{1}{2}(A_{\pi_1}^T\lambda + (n+1) \mathbf{1}_{n^2})$. The following 
statements are equivalent: \\ 
(i) $\lambda \in F_D$. \\
(ii) $A_{\pi_r} x <> \mathbf{0}$ for $r=1, 2, 3$.
\end{theorem}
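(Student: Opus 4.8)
The plan is to reduce both implications to the single identity $A_{\pi_r}x = \tfrac{1}{2}A_{\pi_r}A_{\pi_1}^T\lambda$ for $r = 1,2,3$, after which the equivalence of (i) and (ii) is immediate. Indeed, the defining condition for $\lambda\in F_D$ is precisely that $A_{\pi_r}A_{\pi_1}^T\lambda <> \mathbf{0}$ for $r=1,2,3$ (together with $\lambda\in\{-1,+1\}^{n\cdot s(n)}$, which is already part of the hypothesis), while statement (ii) asks for $A_{\pi_r}x <> \mathbf{0}$ for $r=1,2,3$; so it suffices to show that these two families of conditions coincide.

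First I would establish the identity. Multiplying the defining equation $x = \tfrac{1}{2}(A_{\pi_1}^T\lambda + (n+1)\mathbf{1}_{n^2})$ on the left by $A_{\pi_r}$ and using $A_{\pi_r}\mathbf{1}_{n^2} = \mathbf{0}$ (the same fact already invoked in the proofs of Lemma \ref{L:31} and Lemma \ref{L:33}) yields
\[
A_{\pi_r}x = \tfrac{1}{2}\bigl(A_{\pi_r}A_{\pi_1}^T\lambda + (n+1)A_{\pi_r}\mathbf{1}_{n^2}\bigr) = \tfrac{1}{2}A_{\pi_r}A_{\pi_1}^T\lambda
\]
for $r=1,2,3$.

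Then I would conclude. Since $x\in\mathds{Z}^{n^2}$ by hypothesis, $A_{\pi_r}x$ is an integer vector, and the displayed identity shows that the $i^{th}$ component of $A_{\pi_r}x$ vanishes exactly when the $i^{th}$ component of $A_{\pi_r}A_{\pi_1}^T\lambda$ vanishes, because multiplication by the nonzero scalar $\tfrac12$ does not change which components are zero. Hence $A_{\pi_r}x <> \mathbf{0}$ if and only if $A_{\pi_r}A_{\pi_1}^T\lambda <> \mathbf{0}$, for each $r$. Running this over $r=1,2,3$ gives (i)$\Leftrightarrow$(ii). Equivalently, the implication (i)$\Rightarrow$(ii) is just Lemma \ref{L:33} with its additional conclusions discarded, and (ii)$\Rightarrow$(i) follows from the same computation read in the other direction; the direct argument above dispatches both directions at once.

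There is no real obstacle here: the statement is essentially a two-way reformulation of Lemmas \ref{L:31} and \ref{L:33}. The only point requiring a little care is bookkeeping — checking that all vectors in sight are integer vectors so that the relation $<>$ of Definition \ref{D:nonzero} is meaningful, and observing that the factor $\tfrac12$ is harmless precisely because it is nonzero.
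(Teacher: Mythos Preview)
Your argument is correct and essentially matches the paper's proof: the paper invokes Lemma \ref{L:33} for (i)$\Rightarrow$(ii) and performs exactly your computation $A_{\pi_r}A_{\pi_1}^T\lambda = 2A_{\pi_r}x - (n+1)A_{\pi_r}\mathbf{1}_{n^2} = 2A_{\pi_r}x$ for (ii)$\Rightarrow$(i), whereas you package both directions via the single identity $A_{\pi_r}x = \tfrac{1}{2}A_{\pi_r}A_{\pi_1}^T\lambda$. The only quibble is your closing remark that this is a ``two-way reformulation of Lemmas \ref{L:31} and \ref{L:33}'': Lemma \ref{L:31} is not really in play here, since it builds $\lambda$ from $x$ via $\operatorname{sgn}$ rather than via the affine formula in the hypothesis.
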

\begin{proof}
``(i) $\Rightarrow$ (ii)" This direction follows from Lemma \ref{L:33}. \\
``(ii) $\Rightarrow$ (i)" Using the assumptions
\[
\begin{array}{lll}
A_{\pi_r}A_{\pi_1}^T \lambda 
& = & 2 A_{\pi_r}x - (n+1)A_{\pi_r}\mathbf{1}_{n^2} \\
& = & 2 A_{\pi_r}x \\
& <> & \mathbf{0}
\end{array}
\]
for $r=1, 2, 3$, i.e., $\lambda \in F_D$.
\end{proof}

The dual optimal value $v_D = 0$ if and only if one (or each) solution $\lambda$ of $(DP_{opt})$ 
satisfies $A_{eq}A_{\pi_1}^T \lambda = 2 g - (n+1)\mathbf{1}_k$. We describe the relation 
between the dual problem and the dual optimization problem. This relation follows in a straightforward 
manner from the definitions of the corresponding problems.

\begin{theorem} \label{T:46}
Let $\lambda \in \{-1,+1\}^{n \cdot s(n)}$. The following statements are equivalent: \\
(i) $\lambda$ solves $(DP)$. \\
(ii) $\lambda$ solves $(DP_{opt})$ and the dual optimal value $v_D = 0$.
\end{theorem}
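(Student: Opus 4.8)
The plan is to reduce both statements to the same pair of conditions on $\lambda$. First I would unwind the definition of $(DP)$: a point $\lambda \in \{-1,+1\}^{n \cdot s(n)}$ solves $(DP)$ exactly when $A_{\pi_r}A_{\pi_1}^T\lambda <> \mathbf{0}$ for $r=1,2,3$, that is, $\lambda \in F_D$, and in addition $A_{eq}A_{\pi_1}^T\lambda = 2g-(n+1)\mathbf{1}_k$. By the componentwise reformulation recorded right after the definition of $(DP)$, this last equality holds if and only if $[A_{\pi_1}^T\lambda]_{i_l} = 2g_{i_l}-(n+1)$ for every $l \in \{1,\ldots,k\}$, equivalently $\sharp\{1 \le l \le k \mid [A_{eq}A_{\pi_1}^T\lambda]_l = 2g_{i_l}-(n+1)\} = k$, which is precisely the statement $f_D(\lambda)=0$. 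So ``$\lambda$ solves $(DP)$'' is synonymous with ``$\lambda \in F_D$ and $f_D(\lambda)=0$''.

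Next I would observe that the counting set defining $f_D$ is a subset of $\{1,\ldots,k\}$, so $f_D(\lambda) \le 0$ for all $\lambda$, and hence $v_D \le 0$ (already noted in the text). For the implication (i) $\Rightarrow$ (ii): if $\lambda$ solves $(DP)$, then by the previous paragraph $\lambda \in F_D$ and $f_D(\lambda)=0$; since $F_D \ne \emptyset$ and $f_D \le 0$ on $F_D$, we get $v_D \le 0 \le f_D(\lambda)$, so $v_D = 0 = f_D(\lambda)$, meaning $\lambda$ attains the maximum and therefore solves $(DP_{opt})$ with $v_D = 0$.

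For (ii) $\Rightarrow$ (i): if $\lambda$ solves $(DP_{opt})$ and $v_D = 0$, then $\lambda \in F_D$ gives the three conditions $A_{\pi_r}A_{\pi_1}^T\lambda <> \mathbf{0}$, while $f_D(\lambda) = v_D = 0$ is, by the first paragraph, equivalent to $A_{eq}A_{\pi_1}^T\lambda = 2g-(n+1)\mathbf{1}_k$; together these are exactly the defining conditions of $(DP)$, so $\lambda$ solves $(DP)$. I do not anticipate a real obstacle: the argument is the bookkeeping translation of the combinatorial objective $f_D$ back into the equality constraint of $(DP)$, entirely parallel to Theorem \ref{T:42} on the primal side. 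The only point requiring care is the direction of optimization — because we maximize and $f_D \le 0$, the conditions ``$f_D(\lambda)=0$'' and ``$\lambda$ is optimal with $v_D = 0$'' genuinely coincide, which is what makes the equivalence clean.
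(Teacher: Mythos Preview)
Your proposal is correct and is exactly the straightforward unwinding of definitions the paper has in mind; the paper itself gives no written proof for Theorem~\ref{T:46}, only the remark preceding it that ``this relation follows in a straightforward manner from the definitions of the corresponding problems.'' One cosmetic point: in your chain ``$v_D \le 0 \le f_D(\lambda)$'' the essential inequality $f_D(\lambda) \le v_D$ (valid because $\lambda \in F_D$ and $v_D$ is the maximum) is used only implicitly; writing $0 = f_D(\lambda) \le v_D \le 0$ would make the squeeze explicit.
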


                                        %
                                        %
\section{Duality Results} \label{S:dual}
In this section we collect the classical duality statements for the generalized Sudoku problem, 
namely weak duality, duality gap and strong duality. We start with the weak duality statement. 

\begin{theorem}[Weak Duality] \label{T:51}
The following statements hold: \\
(i) $f_D(\lambda) \le 0 \le f_P(x)$ for each $x \in F_P$ and $\lambda \in F_D$. \\
(ii) $v_D \le 0 \le v_P$.
\end{theorem}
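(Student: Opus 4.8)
The plan is to read both inequalities straight off the definitions of the two objective functions, invoking nothing about feasibility beyond what guarantees that the optimal values are attained.

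For part (i), I would first note that $f_P(x) = \sharp\{1 \le i \le n^2 \mid x_i = \infty\}$ is by definition the cardinality of a finite set, hence a nonnegative integer for every $x \in \mathds{Z}^{n^2}_\infty$; in particular $f_P(x) \ge 0$ for each $x \in F_P$. For the dual side, the set $\{1 \le l \le k \mid [A_{eq}A_{\pi_1}^T \lambda]_l = 2 g_{i_l} - (n+1)\}$ occurring in the definition of $f_D$ is a subset of $\{1, \ldots, k\}$, so its cardinality is at most $k$; subtracting $k$ yields $f_D(\lambda) \le 0$ for every $\lambda \in \{-1,+1\}^{n \cdot s(n)}$, in particular for each $\lambda \in F_D$. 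Putting the two bounds together gives $f_D(\lambda) \le 0 \le f_P(x)$.

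For part (ii), I would use the solvability results already at hand. If $F_P \ne \emptyset$, Theorem \ref{T:41} supplies a solution $x^\ast \in F_P$ of $(PP_{opt})$ with $f_P(x^\ast) = v_P$, and part (i) gives $v_P = f_P(x^\ast) \ge 0$. Symmetrically, if $F_D \ne \emptyset$, Theorem \ref{T:43} supplies $\lambda^\ast \in F_D$ with $f_D(\lambda^\ast) = v_D$, and part (i) gives $v_D = f_D(\lambda^\ast) \le 0$. When one of the feasible sets is empty the corresponding optimal value is taken to be $+\infty$ respectively $-\infty$ by the usual convention, and the inequality is then immediate. Combining, $v_D \le 0 \le v_P$.

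I do not expect a genuine obstacle here: the statement essentially just records the ranges of $f_P$ and $f_D$ already observed in Section \ref{S:opt}. The only point deserving a word of care is that $v_P$ and $v_D$ are a priori defined only when the respective feasible sets are nonempty, which is why the argument for (ii) goes through the solvability Theorems \ref{T:41} and \ref{T:43} (or the extended-value convention) rather than reasoning about the infimum and supremum directly.
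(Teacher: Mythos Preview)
Your proposal is correct and follows essentially the same line as the paper's proof, which simply observes that $f_D$ is bounded above by $0$ and $f_P$ is bounded below by $0$ from their definitions, then deduces (ii) from (i). Your version is more explicit, unpacking the cardinality bounds and invoking Theorems~\ref{T:41} and~\ref{T:43} (plus the extended-value convention for empty feasible sets) to justify (ii), but the substance is identical.
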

\begin{proof}
We know from the definition of the primal and the dual optimization problem, that the dual 
objective function is bounded from above by zero and the primal objective function is 
bounded from below by zero. This shows (i) and implies (ii).
\end{proof}

\begin{theorem} \label{T:52}
Let $x \in F_P$, $\lambda \in F_D$ and $f_P(x) = f_D(\lambda)$. Then $x$ solves $(PP_{opt})$
with primal optimal value $v_P = 0$ and $\lambda$ solves $(DP_{opt})$ with dual optimal value
$v_D = 0$.
\end{theorem}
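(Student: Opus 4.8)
The plan is to derive everything from the weak duality statement of Theorem \ref{T:51}. First I would apply Theorem \ref{T:51}(i) to the given pair $x \in F_P$ and $\lambda \in F_D$, which yields the chain $f_D(\lambda) \le 0 \le f_P(x)$. Combining this with the hypothesis $f_P(x) = f_D(\lambda)$ then forces $f_D(\lambda) = f_P(x) = 0$, since $0 \le f_P(x) = f_D(\lambda) \le 0$.

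Next I would treat the primal side. Because $x \in F_P$, the feasible set $F_P$ is nonempty, so by Theorem \ref{T:41} the problem $(PP_{opt})$ is solvable and the value $v_P = \min\{f_P(x') \mid x' \in F_P\}$ is attained. As $f_P$ is bounded from below by $0$ we have $v_P \ge 0$, while $x$ is feasible with $f_P(x) = 0$, so $v_P \le 0$; hence $v_P = 0$ and $f_P(x) = v_P$, which by definition means $x$ solves $(PP_{opt})$.

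The dual side is the mirror image. Since $\lambda \in F_D$, the set $F_D$ is nonempty, so by Theorem \ref{T:43} the problem $(DP_{opt})$ is solvable and $v_D = \max\{f_D(\lambda') \mid \lambda' \in F_D\}$ is attained. As $f_D$ is bounded from above by $0$ we have $v_D \le 0$, while $\lambda$ is feasible with $f_D(\lambda) = 0$, so $v_D \ge 0$; hence $v_D = 0$ and $f_D(\lambda) = v_D$, so $\lambda$ solves $(DP_{opt})$.

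I do not expect a genuine obstacle here: the whole argument is a one-line consequence of weak duality sandwiching both objective values at $0$, combined with the previously established solvability results. The only point requiring a little care is to invoke Theorems \ref{T:41} and \ref{T:43} (equivalently, the boundedness and integrality of $f_P$ and $f_D$) so that $v_P$ and $v_D$ are actually attained and the two feasible points can legitimately be called solutions, rather than the objective values merely being bounded by $0$.
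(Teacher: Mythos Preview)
Your proposal is correct and follows essentially the same route as the paper: both arguments sandwich everything using weak duality, the paper compressing it into the single chain $0 \le v_P \le f_P(x) = f_D(\lambda) \le v_D \le 0$. Your explicit appeal to Theorems~\ref{T:41} and~\ref{T:43} just makes explicit what the paper leaves implicit in writing $v_P \le f_P(x)$ and $f_D(\lambda) \le v_D$.
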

\begin{proof}
Using Theorem \ref{T:51}, $0 \le v_P \le f_P(x) = f_D(\lambda) \le v_D \le 0$. This implies
$f_P(x)=v_P=0$ and $f_D(\lambda)=v_D=0$.
\end{proof}

Based on the weak duality statement, we define the term duality gap by $v_P - v_D$,
which is a nonnegative value by Theorem \ref{T:51}. 
The next theorem characterizes duality gaps.

\begin{theorem}[Strong Duality] \label{T:53}
Let $F_P \ne \emptyset$ and $F_D \ne \emptyset$. The following statements are equivalent: \\
(i) $v_P = v_D$. \\
(ii) There exists a solution $x=(x_1, \ldots, x_{n^2})^T$ of $(PP_{opt})$,
such that $x_i \ne \infty$ for $i=1, \ldots, n^2$. \\
(iii) There exists a solution $\lambda$ of $(DP_{opt})$, such that
$A_{eq}A_{\pi_1}^T \lambda = 2 g - (n+1)\mathbf{1}_k$.
\end{theorem}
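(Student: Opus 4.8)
The plan is to collapse the three statements onto two scalar conditions, $v_P = 0$ and $v_D = 0$, and then to prove those two conditions equivalent by passing through the solution correspondence of Theorem \ref{T:31}. First I would record the consequences of weak duality: by Theorem \ref{T:51} we have $v_D \le 0 \le v_P$, and since $F_P \ne \emptyset$ and $F_D \ne \emptyset$, Theorems \ref{T:41} and \ref{T:43} guarantee that $v_P$ and $v_D$ are attained. From $v_D \le 0 \le v_P$ it is immediate that statement (i), $v_P = v_D$, holds if and only if $v_P = 0$ and $v_D = 0$ both hold.

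Next I would identify (ii) with $v_P = 0$ and (iii) with $v_D = 0$. If a solution $x$ of $(PP_{opt})$ has all finite components, then $f_P(x) = 0$, hence $v_P = 0$; conversely, if $v_P = 0$, then $(PP_{opt})$ has a solution by Theorem \ref{T:41}, and by the characterization stated just before Theorem \ref{T:42} that solution has all finite components, so (ii) holds. The same reasoning, using the characterization stated just before Theorem \ref{T:46}, gives that (iii) is equivalent to $v_D = 0$.

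It then remains to prove $v_P = 0 \Leftrightarrow v_D = 0$, which is the substantive part. If $v_P = 0$, pick $x \in F_P$ with $f_P(x) = 0$; then all components of $x$ are finite, so by Theorem \ref{T:42} $x$ solves $(PP)$, by Theorem \ref{T:31}(i) $\lambda = sgn(A_{\pi_1}x)$ solves $(DP)$, and by Theorem \ref{T:46} this forces $v_D = 0$. Conversely, if $v_D = 0$, pick $\lambda \in F_D$ with $f_D(\lambda) = 0$; then $A_{eq}A_{\pi_1}^T\lambda = 2g - (n+1)\mathbf{1}_k$, so by Theorem \ref{T:46} $\lambda$ solves $(DP)$, by Theorem \ref{T:31}(ii) $x = \tfrac{1}{2}(A_{\pi_1}^T\lambda + (n+1)\mathbf{1}_{n^2})$ solves $(PP)$, and by Theorem \ref{T:42} $v_P = 0$. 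Assembling the chain (i) $\Leftrightarrow$ ($v_P = 0$ and $v_D = 0$), (ii) $\Leftrightarrow v_P = 0$, (iii) $\Leftrightarrow v_D = 0$, together with $v_P = 0 \Leftrightarrow v_D = 0$, yields the full equivalence of (i), (ii), (iii).

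I do not expect a genuinely hard step: the proof is essentially bookkeeping that glues together Theorems \ref{T:31}, \ref{T:42}, \ref{T:46} and weak duality. The one place that needs care is the link $v_P = 0 \Rightarrow v_D = 0$ (and its mirror image): one must first promote an optimal primal point with finite components to an actual solution of the constraint-satisfaction problem $(PP)$ before applying the explicit dual formula of Theorem \ref{T:31}, since that theorem is phrased for $(PP)$ and $(DP)$ rather than for the optimization problems. I would also make sure the hypotheses $F_P \ne \emptyset$ and $F_D \ne \emptyset$ are invoked exactly where they are needed, namely to ensure $v_P$ and $v_D$ are well-defined and attained so that the three characterizations can be applied.
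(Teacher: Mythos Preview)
Your proposal is correct and follows essentially the same route as the paper: both arguments rest on weak duality (Theorem~\ref{T:51}), attainment of the optima (Theorems~\ref{T:41} and~\ref{T:43}), the equivalences of Theorems~\ref{T:42} and~\ref{T:46}, and the primal--dual correspondence of Theorem~\ref{T:31}. The only difference is cosmetic: you first factor everything through the scalar conditions $v_P=0$ and $v_D=0$ before gluing, whereas the paper proves the implications $(i)\Rightarrow(ii)\wedge(iii)$, $(ii)\Rightarrow(i)$, $(iii)\Rightarrow(i)$ directly.
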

\begin{proof}
Assume (i) holds. By Theorems \ref{T:41} and \ref{T:43} $(PP_{opt})$ and $(DP_{opt})$ 
are solvable. Let $x$ be a solution of $(PP_{opt})$ and let $\lambda$ be a solution of 
$(DP_{opt})$. By Theorem (i) and \ref{T:52}, $f_P(x)=v_P=0$ and $f_D(\lambda)=v_D=0$,
i.e., (ii) and (iii) hold. \\
``(ii) $\Rightarrow$ (i)" Let $x=(x_1, \ldots, x_{n^2})^T$ be a solution of $(PP_{opt})$,
such that $x_i \ne \infty$ for $i=1, \ldots, n^2$. This implies $f_P(x)=0=v_P$, hence
$x$ solves $(PP)$ by Theorem \ref{T:42}. Define $\lambda = sgn(A_{\pi_1}^T x)$, 
then $\lambda$ solves $(DP)$ by Theorem \ref{T:31} (i). By Theorem \ref{T:46}, $\lambda$ 
solves $(DP_{opt})$ with $f_D(\lambda)=v_D=0$. \\
``(iii) $\Rightarrow$ (i)"  Let $\lambda$ be a solution of $(DP_{opt})$, such that
$A_{eq}A_{\pi_1}^T \lambda = 2 g - (n+1)\mathbf{1}_k$. This implies $f_D(\lambda)=v_D=0$, 
hence $\lambda$ solves $(DP)$ by Theorem \ref{T:46}. Define 
$x = \frac{1}{2}(A_{\pi_1}^T\lambda + (n+1) \mathbf{1}_{n^2})$, then $x$ solves
$(PP)$  by Theorem \ref{T:31} (ii). By Theorem \ref{T:42}, $x$ solves $(PP_{opt})$ 
with $f_P(x)=v_P=0$.
\end{proof}

It is possible to express the preceding theorem in terms of the original problems $(PP)$ and $(DP)$.
The strong duality result states, there does not exist a duality gap between $(PP_{opt})$ and
$(DP_{opt})$ if and only if the primal problem $(PP)$ is solvable respectively if and only if 
the dual problem $(DP)$ is solvable.

                                        %
                                        %


\begin{thebibliography}{99}

\bibitem{BV} S. Boyd and L. Vandenberghe, {\em Convex Optimization}, Cambridge University Press, 
   Cambridge, Seventh Printing, 2009.
\bibitem{Chv} V. Chv\'{a}tal, {\em Linear Programming}, Freeman, New York, 1983.
\bibitem{Cro} J.F. Crook, A pencil-and-paper algorithm for solving Sudoku puzzles,
   {\em Notices Amer. Math. Soc.} 56 (2009), 460 - 468.
\bibitem{DT} G.B. Dantzig and M.N. Thapa, {\em Linear Programming 2: Theory and Extensions},
   Springer, New York, 2003.
\bibitem{DR} R. Dechter and F. Rossi, Constraint satisfaction, in: L. Nadel (ed.), {\em Encyclopedia of Cognitive 
   Science}, Vol. 1, Nature Publ. Group, London, 2003, 793 - 800.
\bibitem{Fis} T. Fischer, A necessary solution condition for Sudoku, {\tt arXiv:1210.6343 [math.CO]}, 2012.
\bibitem{KK} V. Kaibel and T. Koch, Mathematik f\"{u}r den Volkssport, {\em Mitteilungen der DMV} 14 
   (2006), 93 - 96. 
\bibitem{Pro} J.S. Provan, Sudoku: strategy versus structure, {\em Amer. Math. Monthly} 116 (2009), 702 - 707.

\end{thebibliography}
\end{document}